\theoremstyle{plain} 
\newtheorem{thm}{Theorem}
\theoremstyle{definition}
\newcommand{\T}{\textsf{T}}
\title{Counter-terrorism analysis \\ using cooperative game theory}
\author{Sung Chan Choi\thanks{Department of Mathematics, University of Utah, 155 S. 1400 E., Salt Lake City, UT 84112, USA. e-mail: choi@math.utah.edu}}
\date{}
\pgfplotsset{compat=1.16}
\begin{document}
\maketitle

\begin{abstract}
Game theory has been applied in many fields of study, especially economics and political science.  Arce M. and Sandler (2005) analyzed counter-terrorism using non-cooperative game theory (the players are, for example, the US and the EU), which assumes that communication among the players is not allowed, or, if it is allowed, then there is no mechanism to enforce any agreement the players may make. The only solution in the non-cooperative setting would be a Nash equilibrium because the players adopt only self-enforcing strategies. Here we analyze counter-terrorism using cooperative game theory, because there are ways to communicate among the players and to make binding agreements; indeed, countries that oppose terrorism are closely connected to each other in many aspects such as economically and in terms of international politics. 
\end{abstract}

\section{Introduction}

Arce M. and Sandler (2005) classified counter-terrorism policies into preemption, no action, and deterrence. Preemption is a proactive policy in which terrorists and their assets are attacked to curb subsequent terrorist campaigns. It can protect all potential targets from terrorists. Deterrence comprises more defensive or passive counter-terrorism measures that include making technological barriers such as metal detectors or bomb-sniffing equipment at airports, fortifying potential targets, and securing borders. These defensive policies are intended to deter an attack by either making success more difficult or increasing the likelihood of negative consequences for the terrorists.

The reason why many countries facing terrorism are more inclined to choose the deterrence policy rather than preemption, despite the greater social gain using preemption, is that the famous ``prisoner's dilemma'' is hidden in the game, as we will point out below.

Since preemption can protect all potential targets, it provides public benefits. In contrast, deterrence imposes public costs because it can deflect the attack to relatively less-guarded targets. We assume that each preemption gives a public benefit of 4 for player 1 and player 2 at a private cost of 6 to the player who uses preemption.  Comparing with deterrence, it imposes a public cost of 4 on both the deterrer and the other because the nondeterrer suffers the deflection costs of being the target of choice, and it provides private gains of 6 to the only deterrer motivated by greater amount of gain than the cost.  The payoff bimatrix from Arce M. and Sandler (2005), in which the row player is player 1 (e.g., the US) and the column player is player 2 (e.g., the EU), is given by
\begin{equation}\label{bimatrix}
\bordermatrix{
 & \text{Preempt} & \text{Status Quo} & \text{Deter}\cr
\text{Preempt} & (2,2) & (-2,4) & (-6,6)\cr
\text{Status Quo} & (4,-2) & (0,0) & (-4,2)\cr
\text{Deter} & (6,-6) & (2,-4) & (-2,-2)}.
\end{equation}
  
\begin{itemize}
\item {(Preempt, Preempt)}\\
The players give a public benefit $(=4)$ to each other so both can take a total benefit of 8 and they pay a private cost $(=6)$ respectively. Therefore each payoff is equal to $2$ $(=4+4-6)$.
 
\item {(Preempt, Status Quo) or (Status Quo, Preempt)}\\
The preemptor can gain public benefit $(=4)$ from himself preempting but pay private cost $(=6)$. Hence his payoff will be $-2$ $(=4-6)$. However, the player adopting the status quo can only get the benefit $(=4)$ which the preemptor makes without any cost. So the payoff to the player doing nothing is $4$ $(=4-0)$.

\item {(Preempt, Deter) or (Deter, Preempt)}\\
The payoff to the preemptor is $-6$ $(=4-6-4)$ because he can enjoy his own public benefit $(=4)$ but has to pay a private cost $(=6)$ and a public cost $(=4)$ raised by the deterrer together. On the other hand, since the deterrer can attain a private benefit $(=6)$ from himself and also a public benefit $(=4)$ by the preemptor but has only to pay the public cost $(=4)$ raised by his deterring, the payoff to the deterrer is $6$ $(=6+4-4)$.

\item {(Deter, Status Quo) or (Status Quo, Deter)}\\
The only deterrer gets a private benefit $(=6)$ and pays a public cost $(=4)$. So the payoff to the deterrer is $2$ $(=6-4)$. In case of adopting the status quo, it just costs $(=4)$ without any benefit.  The payoff for adopting the status quo is $-4$ $(=0-4)$.

\item {(Deter, Deter)}\\
The payoff to the players is $-2$ $(=6-4-4)$ because each can get a private benefit of 6 but they impose a public cost of 4 on each other. 
\end{itemize}

Notice that (Deter, Deter) is a pure Nash equilibrium because Deter is a dominant strategy for both players.  Yet both players receive higher payoffs from (Preempt, Preempt) and from (Status Quo, Status Quo), so this is a classic prisoner's dilemma situation.

Our aim here is to apply cooperative game theory to this model instead of non-cooperative game theory.  There are at least three kinds of solution in cooperative game theory, namely, the TU (transferable utility) solution, the NTU solution based on the Nash Bargaining Model, and the NTU solution based on the lambda transfer  approach.  Ferguson (2014) is recommended for background on this theory.

\subsection{TU solution}  
In a cooperative game with payoff bimatrix $(\bm A,\bm B)$, the players will agree to play so as to achieve $\sigma=\max_{i,j}(a_{ij}+b_{ij})$, and then will divide $\sigma$ between them in some way.  If the threat strategies are $\bm p$ for Player 1 and $\bm q$ for Player 2, Player 1 will accept no less than $D_1=\bm p^\T\bm A\bm q$ and player 2 will accept no less than $D_2=\bm p^\T\bm B\bm q$ since the players can receive them without agreement.  The players will negotiate about which point on the line segment $u+v=\sigma$ from $(D_1,\sigma-D_1)$ to $(\sigma-D_2,D_2)$ is the TU solution.  It should be the midpoint of the interval, i.e.,
\begin{equation*}
\bm{\varphi}=(\varphi_{1},\varphi_{2})=\bigg(\frac{\sigma+D_1-D_2}{2},\frac{\sigma-(D_1-D_2)}{2}\bigg).
\end{equation*}
This shows that Player 1 wants to maximize $D_1-D_2$, while Player 2 wants to minimize it.  Since $D_1-D_2=\bm p^\T(\bm A-\bm B)\bm q$, we see that the optimal threat strategies are given by the solution $(\bm p^*,\bm q^*)$ of the matrix game $\bm A-\bm B$.  With
$$
\delta=\text{Val}(\bm A-\bm B)=(\bm p^*)^\T(\bm A-\bm B)\bm q^*,
$$
the TU solution becomes 
\begin{equation*}
\bm\varphi^*=(\varphi_1^*,\varphi_2^*)=\bigg(\frac{\sigma+\delta}{2},\frac{\sigma-\delta}{2}\bigg).
\end{equation*}

Let 
\[
\setlength{\arraycolsep}{2mm}
\bm{A}=
\begin{pmatrix}2 & -2 & -6\\
4 & 0 & -4\\
6 & 2 & -2
\end{pmatrix}\quad\textrm{and}\quad
\bm{B}=
\begin{pmatrix}2 & 4 & 6\\
-2 & 0 & 2\\
-6 & -4 & -2
\end{pmatrix}
\]
as in \eqref{bimatrix}. Then the difference matrix 
\[
\setlength{\arraycolsep}{2mm}\bm{A}-\bm{B}=\begin{pmatrix}0 & -6 & -12\\
6 & 0 & -6\\
12 & 6 & 0
\end{pmatrix}
\]
has a saddle point at the lower right with value $\delta=0$. So $\bm{p}^{*}=(0,0,1)^\T$ and $\bm{q}^{*}=(0,0,1)^\T$ are the threat strategies and the disagreement point is $(D_1,D_2)=(-2,-2)$, which is the Nash equilibrium in the non-cooperative game. Also, we can get value $\sigma=\max(a_{ij}+b_{ij})=4$. Therefore the TU solution is 
\[
\bm\varphi^*=\bigg(\frac{\sigma+\delta}{2},\frac{\sigma-\delta}{2}\bigg)=\bigg(\frac{4+0}{2},\frac{4-0}{2}\bigg)=(2,2).
\]
Since the cooperative strategy gives (2,2), this does not require any side payment.

\subsection{NTU solution based on the Nash Bargaining Model}  
This model assumes that two elements should be given and known to the players.  One element is a compact (i.e., closed and bounded), convex set $S$
in the plane. We refer to $S$ as the NTU-feasible set. Another is a threat point, $(u^{*},v^{*})\in S$. Given an NTU-feasible set $S$ and a threat point $(u^{*},v^{*})\in S$, we can find a unique NTU solution $(\bar{u},\bar{v})\in S$ that maximizes $f(u,v)=(u-u^{*})(v-v^{*})$, as suggested by Nash.

\begin{thm} If there exists a point $(u,v)\in S$ with $u>u^{*}$
and $v>v^{*}$ then 
\begin{equation*}
\max_{u>u^{*},v>v^{*},(u,v)\in S}(u-u^{*})(v-v^{*})
\end{equation*}
is attained at a unique point $(\bar{u},\bar{v})$. 
\end{thm}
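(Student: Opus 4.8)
The plan is to separate the claim into existence and uniqueness, using compactness for the former and the convexity of $S$ for the latter. For existence, I would first restrict attention to the set $T=\{(u,v)\in S: u\ge u^{*},\ v\ge v^{*}\}$. Since $T$ is the intersection of $S$ with two closed half-planes, it is a closed subset of the compact set $S$, hence compact; and since $f(u,v)=(u-u^{*})(v-v^{*})$ is continuous, it attains its maximum $M$ on $T$ at some point $(\bar{u},\bar{v})$. By hypothesis there is a point of $S$ with $u>u^{*}$ and $v>v^{*}$, at which $f>0$, so $M>0$. Because $f$ vanishes on the two edges $u=u^{*}$ and $v=v^{*}$ of $T$, the maximizer cannot lie on those edges, so in fact $\bar{u}>u^{*}$ and $\bar{v}>v^{*}$; this is precisely a maximizer over the strict region claimed in the theorem.

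For uniqueness, I would suppose $(u_{1},v_{1})$ and $(u_{2},v_{2})$ are two distinct maximizers, both with value $M>0$. Writing $a_{i}=u_{i}-u^{*}>0$ and $b_{i}=v_{i}-v^{*}>0$, we have $a_{1}b_{1}=a_{2}b_{2}=M$. By convexity of $S$ the midpoint $\bigl(\tfrac12(u_{1}+u_{2}),\tfrac12(v_{1}+v_{2})\bigr)$ lies in $S$, and I would evaluate $f$ there as
\begin{equation*}
\frac{(a_{1}+a_{2})(b_{1}+b_{2})}{4}=\frac{2M+a_{1}b_{2}+a_{2}b_{1}}{4}.
\end{equation*}
The key step is then the AM--GM estimate $a_{1}b_{2}+a_{2}b_{1}\ge 2\sqrt{a_{1}b_{1}\,a_{2}b_{2}}=2M$, which forces the midpoint value to be at least $M$, with equality if and only if $a_{1}b_{2}=a_{2}b_{1}$. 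Combined with $a_{1}b_{1}=a_{2}b_{2}$, this equality condition yields $b_{1}^{2}=b_{2}^{2}$, hence $b_{1}=b_{2}$ and then $a_{1}=a_{2}$, i.e.\ the two points coincide. Since they were assumed distinct, the inequality is strict, so the midpoint attains a value exceeding $M$, contradicting the maximality of $M$.

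The one place that needs care—and the main obstacle—is making the equality analysis in AM--GM genuinely strict: I must rule out the degenerate possibility that two \emph{distinct} maximizers produce equality in the estimate. This is exactly what the short computation above settles, by showing that $a_{1}b_{2}=a_{2}b_{1}$ together with $a_{1}b_{1}=a_{2}b_{2}$ collapses the two points to one. A minor point worth noting is that $T$ is nonempty, since it contains the threat point $(u^{*},v^{*})$, so the existence argument is not vacuous; and the strict positivity $a_{i},b_{i}>0$, which underpins the AM--GM step, is guaranteed precisely because the maximizer was shown to lie in the open region rather than on the boundary edges.
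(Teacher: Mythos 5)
Your proposal is correct, and its uniqueness step is at heart the same midpoint-convexity argument the paper uses, but you close it differently and you prove more. The paper's proof handles \emph{only} uniqueness: it assumes two distinct maximizers, uses $M>0$ to argue that $u_1=u_2$ forces $v_1=v_2$, takes WLOG $u_1<u_2$ (hence $v_1>v_2$, since both factors are positive and the products are equal), and expands $f$ at the midpoint into the exact identity $f\bigl(\tfrac12(u_1+u_2),\tfrac12(v_1+v_2)\bigr)=M+\tfrac{(u_1-u_2)(v_2-v_1)}{4}>M$, getting the contradiction from the sign of the last term. You reach the same midpoint value $\tfrac{2M+a_1b_2+a_2b_1}{4}$ but obtain strictness via the AM--GM inequality $a_1b_2+a_2b_1\ge 2M$ together with a careful equality analysis ($a_1b_2=a_2b_1$ and $a_1b_1=a_2b_2$ collapse the two points); this is logically equivalent to the paper's sign argument, just packaged so that no WLOG ordering is needed. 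The genuine difference is that you also prove the existence half of the theorem --- compactness of $T=S\cap\{u\ge u^*,\,v\ge v^*\}$, continuity of $f$, and the observation that $M>0$ pushes the maximizer off the edges $u=u^*$ and $v=v^*$ into the open region --- whereas the paper silently assumes a maximum is attained and only establishes uniqueness. Since the statement asserts that the maximum \emph{is attained} at a unique point, your version is the more complete proof of what is actually claimed; the paper's is shorter because it leaves the (routine, but necessary) compactness argument implicit.
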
 

\begin{proof} Suppose there are two different points $(u_1,v_1), (u_2,v_2)\in S$ that maximize $f(u,v)=(u-u^{*})(v-v^{*})$, and let $M$ be the maximum value. Since $M>0$, $u_1=u_2$ implies $v_1=v_2$. Since $S$ is convex and $(u,v)\in S$, without loss of generality we can suppose that $u_1<u_2$, in which case $v_1>v_2$, and put $(u,v)=\frac{1}{2}(u_1,v_1)+\frac{1}{2}(u_2,v_2)=\frac{1}{2}(u_1+u_2,v_1+v_2)$. Now 
\begin{align*}
f(u,v) & =  \bigg(\frac{u_1+u_2}{2}-u^{*}\bigg)\bigg(\frac{v_1+v_2}{2}-v^{*}\bigg)\\
 & =  \frac{(u_1-u^{*})+(u_2-u^{*})}{2}\cdot\frac{(v_1-v^{*})+(v_2-v^{*})}{2}\\
 & =  \frac{2(u_1-u^{*})(v_1-v^{*})-(u_1-u^{*})(v_1-v^{*})+2(u_2-u^{*})(v_2-v^{*})}{4}\\
 &   \quad{}+\frac{-(u_2-u^{*})(v_2-v^{*})+(u_1-u^{*})(v_2-v^{*})+(u_2-u^{*})(v_1-v^{*})}{4}\\
 & =  \bigg(\frac{(u_1-u^{*})(v_1-v^{*})}{2}+\frac{(u_2-u^{*})(v_2-v^{*})}{2}\bigg)+\frac{(u_1-u_2)(v_2-v_1)}{4}\\
 & =  M+\frac{(u_1-u_2)(v_2-v_1)}{4}.
\end{align*}
Since $u_1<u_2$ and $v_1>v_2$, the last fraction is positive, hence $f(u,v)>M$, which is a contradiction to the assumption that $M$ is the maximum value. Therefore, the point $(\bar{u},\bar{v})$ is unique. 
\end{proof}

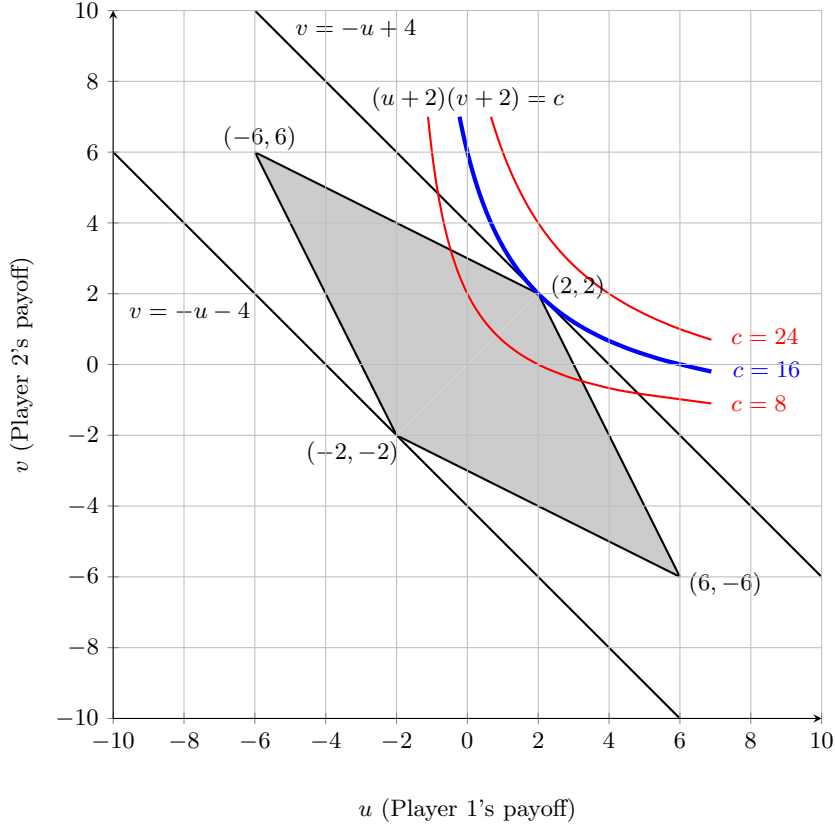
\begin{figure}[htb]  
\begin{center}
\begin{small}
 	\begin{tikzpicture}
 	\begin{axis}[
 	axis lines=middle,
 	axis line style= ->,
 	axis y line=left,
 	axis x line=bottom, 
 	axis on top=true,
 	xmin=-10,xmax=10,
 	ymin=-10,ymax=10,
 	x label style={at={(axis description cs:0.5,-0.1)},anchor=north},
 	y label style={at={(axis description cs:-0.1,.5)},rotate=90,anchor=south},
 	xlabel={$u$ (\text{Player 1's payoff})},
 	ylabel={$v$ (\text{Player 2's payoff})},
 	height=11cm,
 	width=11cm,
 	grid,
 	xtick={-10,-8,...,10},
 	ytick={-10,-8,...,10},
 	]
 	\addplot[name path=plot1,smooth,thick,black,-, domain=-6:10]({-x+4},{x});
 	\addplot[name path=plot2,smooth, thick,black,-,domain=-10:6]({-x-4},{x});
 	\addplot[name path=plot3,smooth, thick,black,-,domain=-6:2]({(-0.5)*x+3},{x});
 	\addplot[name path=plot4,smooth, thick,black,-,domain=2:6]({(-2)*x+6},{x});
 	\addplot[name path=plot5,smooth, thick,black,-,domain=-2:6]({(-0.5)*x-3},{x});
 	\addplot[name path=plot6,smooth, thick,black,-,domain=-6:-2]({(-2)*x-6},{x});
 	\addplot[gray!40!white] fill between[of= plot3 and plot6, soft clip={domain=-10:10}];
 	\addplot[gray!40!white] fill between[of= plot4 and plot5, soft clip={domain=-10:10}];
 	\addplot[name path=plot7,smooth, ultra thick,blue,-,domain=-0.2:7]({(16)/(x+2)-2},{x});
 	\addplot[name path=plot8,smooth, thick,red,-,domain=0.7:7]({(24)/(x+2)-2},{x});
 	\addplot[name path=plot8,smooth, thick,red,-,domain=-1.1:7]({(8)/(x+2)-2},{x});
 	\node [right, black,ultra thick] at (axis cs: -3,7.5) {$(u+2)(v+2) = c$};
 	\node [right, blue,ultra thick] at (axis cs: 7.2,-.15) {$c=16$};
 	\node [right, red] at (axis cs: 7.2,.8) {$c=24$};
 	\node [right, red] at (axis cs: 7.2,-1.1) {$c=8$};
 	\node [right, black] at (axis cs: -5.1,9.5) {$v=-u+4$};
 	\node [left, black] at (axis cs: -5.9,1.5) {$v=-u-4$};
 	\node [right, black] at (axis cs: 2.1,2.2) {$(2,2)$};
 	\node [right, black] at (axis cs: 6.0,-6.2) {$(6,-6)$};
 	\node [right, black] at (axis cs: -7.15,6.4) {$(-6,6)$};
 	\node [right, black] at (axis cs: -4.8,-2.5) {$(-2,-2)$};
 	\end{axis};
 	
 	\end{tikzpicture}   
\protect\caption{\label{feasible set}TU feasible set: $-u-4\leq v\leq-u+4$. NTU feasible set: shaded area
of rhombus.}
\end{small}
\end{center}
\end{figure}

We can show our bimatrix geometrically in Figure \ref{feasible set}. First, we consider the disagreement point $(u^{*},v^{*})=(-2,-2)$ in the TU solution section as the threat point. The set of Pareto optimal points consists of the two line segments from $(-6,6)$ to $(2,2)$ and from $(2,2)$
to $(6,-6)$. The NTU solution is that point along this path which maximizes $(u+2)(v+2)$. Let $f(u)=(u+2)(v+2)$. Now, the line segment from $(-6,6)$ to $(2,2)$ has the equation, $v=-\frac{1}{2}u+3$.  So we can rewrite $f(u)=(u+2)(v+2)=(u+2)(-\frac{1}{2}u+5)=-\frac{1}{2}u^{2}+4u+10$.  It has its maximum in $u\in[-6,2]$ at $u=2$ where $v$ has the value 2. Similarly, the line segment from $(2,2)$ to $(6,-6)$ satisfies the equation $v=-2u+6$. So we can write $f(u)=-2u^{2}+4u+16$. In this case, it has its maximum in $u\in[2,6]$ at $u=2$ and $v=2$ too. Hence, $f(u)=(u+2)(v+2)$ is maximized along the Pareto boundary at $(\bar{u},\bar{v})=(2,2)$ which is the NTU solution of our example.

\subsection{NTU solution based on the lambda transfer approach} 

If the original bimatrix $(\bm{A},\bm{B})$ and its utilities are not measured in the same units, we can change it into a bimatrix to which the TU theory applies. If an increase of one unit in Player 1's utility is worth an increase $\lambda$ $(>0)$ units in Player 2's utility, then the bimatrix $(\lambda\bm{A},\bm{B})$ has transferable utility. By the TU-solution method with bimatrix $(\lambda\bm{A},\bm{B})$,
the lambda transfer solution for the NTU game is 
\begin{equation}\label{varphi(lambda)}
\bm{\varphi}(\lambda)=(\varphi_{1}(\lambda),\varphi_{2}(\lambda))=\bigg(\frac{\sigma(\lambda)+\delta(\lambda)}{2\lambda},\frac{\sigma(\lambda)-\delta(\lambda)}{2}\bigg),
\end{equation}
where $\sigma(\lambda)=\max_{i,j}(\lambda a_{ij}+b_{ij})$
and $\delta(\lambda)=\mathrm{Val}(\lambda\bm{A}-\bm{B})=\bm{p}^{*\T}(\lambda\bm{A}-\bm{B})\bm{q}^{*}$.
Generally, there is a unique $\lambda$, denoted by $\lambda^*$, such that \eqref{varphi(lambda)} is on the Pareto optimal boundary of the NTU feasible set.  Then $\bm{\varphi}(\lambda^*)$ is the NTU solution.

In our example we have the transferred bimatrix
\[
(\lambda\bm{A},\bm{B})=\begin{pmatrix}(2\lambda,2) & (-2\lambda,4) & (-6\lambda,6)\\
(4\lambda,-2) & (0,0) & (-4\lambda,2)\\
(6\lambda,-6) & (2\lambda,-4) & (-2\lambda,-2)
\end{pmatrix}.
\]
 Then $\delta(\lambda)=-2\lambda+2$ can be found easily through the
difference matrix 
\[
\setlength{\arraycolsep}{2mm}\lambda\bm{A}-\bm{B}=\begin{pmatrix}2\lambda-2 & -2\lambda-4 & -6\lambda-6\\
4\lambda+2 & 0 & -4\lambda-2\\
6\lambda+6 & 2\lambda+4 & -2\lambda+2
\end{pmatrix},
\]
which has a saddle point at the lower right.
It is easy to check that $\sigma(\lambda)=\max_{i,j}(\lambda a_{ij}+b_{ij})$
is given by
\begin{equation*}
\sigma(\lambda)=\begin{cases}
-6\lambda+6 & \text{if \ensuremath{0<\lambda\leq\frac{1}{2}}}\\
2\lambda+2 & \text{if \ensuremath{\frac{1}{2}\leq\lambda\leq2}}\\
6\lambda-6 & \text{if \ensuremath{\lambda\geq2}}.
\end{cases}
\end{equation*}

\begin{description}
\item [{Case}] 1: $0<\lambda\leq\frac{1}{2}$ 
\end{description}
The candidate of the solution is 
\begin{align*}
\bm{\varphi}(\lambda) & = \bigg(\frac{\sigma(\lambda)+\delta(\lambda)}{2\lambda},\frac{\sigma(\lambda)-\delta(\lambda)}{2}\bigg)\\
 & = \bigg(\frac{(-6\lambda+6)+(-2\lambda+2)}{2\lambda},\frac{(-6\lambda+6)-(-2\lambda+2)}{2}\bigg)\\
 & = \bigg(\!\!-4+\frac{4}{\lambda},-2\lambda+2\bigg),
\end{align*}
which does not intersect the NTU feasible set. \begin{description}
\item [{Case}] 2: $\frac{1}{2}\leq\lambda\leq2$ 
\end{description}
Since $\sigma(\lambda)=2\lambda+2$ and $\delta=-2\lambda+2$, we get $\bm{\varphi}(\lambda)=(2/\lambda,2\lambda)$ as the solution.  Only the point $\bm{\varphi}(1)=(2,2)$ belongs to the NTU feasible set.

\begin{description}
\item [{Case}] 3: $\lambda\geq2$ 
\end{description}
The final step is to check whether $\bm{\varphi}(\lambda)=(2-2/\lambda,4\lambda-4)$ is a possible solution, and it is not.\medskip{}

From the cases above, our final NTU solution through the lambda transfer approach is $\bm{\varphi}(\lambda^{*})=(2,2)$ at $\lambda^{*}=1$.

We conclude that all three approaches lead to the same solution, namely (Preempt, Preempt), in contrast to the non-cooperative (Nash equilibrium) solution, (Deter, Deter).


\section{Generalization}

The bimatrix \eqref{bimatrix} was a very specific symmetric example, which we now want to generalize.  The bimatrix 
\begin{small}
\begin{equation}\label{bimatrix-general}
\bordermatrix{
& \text{Preempt} & \text{Status Quo} & \text{Deter} \cr
\text{Preempt} & (2B-c,2B-c) &  (B-c,B) & (B-c-C,B+b-C)\cr
\text{Status Quo} & (B,B-c) & (0,0) & (-C,b-C)\cr
\text{Deter} & (B+b-C,B-c-C) & (b-C,-C) & (b-2C,b-2C)}
\end{equation}
\end{small}
from Arce M. and Sandler (2005) shows the generalized payoffs.  As before, the row player is Player 1 (e.g., the US) and the column player is Player 2 (e.g., the EU), with $B$ and $c$ representing the public benefit and the private cost when a player uses the preemption policy, and $b$ and $C$ denoting
the private benefit and the public cost when a player takes the deterrence action.  Here $B<c<2B$ and $C<b<2C$ are assumed.  The derivation of \eqref{bimatrix-general} is similar to that of \eqref{bimatrix}.

To make the game easier to analyze, we make additional assumptions beyond those of Arce M. and Sandler (2005).  We assume that $B=C$, $c=\alpha B$, and $b=\beta C$, where $1<\alpha,\beta<2$, on the basis of \eqref{bimatrix} and \eqref{bimatrix-general}.  This reduces \eqref{bimatrix-general}, after factoring out $B$, to
\begin{small}
\begin{equation*}
\bordermatrix{
& \text{Preempt} & \text{Status Quo} & \text{Deter}\cr
\text{Preempt} & (2-\alpha,2-\alpha) & (-(\alpha-1),1) & (-\alpha,\beta)\cr
\text{Status Quo} & (1,-(\alpha-1)) & (0,0) & (-1,\beta-1)\cr
\text{Deter} & (\beta,-\alpha) & (\beta-1,-1) & (-(2-\beta),-(2-\beta))}=(\bm{U},\bm{V}),
\end{equation*}
\end{small}
a matrix with two parameters instead of four.

\subsection{TU solution} 
Since 
\[
\setlength{\arraycolsep}{1mm}\bm{U}=\begin{pmatrix}2-\alpha & -(\alpha-1) & -\alpha\\
1 & 0 & -1\\
\beta & \beta-1 & -(2-\beta)
\end{pmatrix}\quad\textrm{and}\quad\bm{V}=\begin{pmatrix}2-\alpha & 1 & \beta\\
-(\alpha-1) & 0 & \beta-1\\
-\alpha & -1 & -(2-\beta)
\end{pmatrix},
\]
 the difference matrix 
\[
\setlength{\arraycolsep}{2mm}\bm{U}-\bm{V}=\begin{pmatrix}0 & -\alpha & -\alpha-\beta\\
\alpha & 0 & -\beta\\
\alpha+\beta & \beta & 0
\end{pmatrix}
\]
has a saddle point at the lower right with value $\delta=0$. So $\bm{p}^{*}=(0,0,1)^\T$ and $\bm{q}^{*}=(0,0,1)^\T$ are the threat strategies and the disagreement point is $(-(2-\beta),-(2-\beta))$, which is the Nash equilibrium in the non-cooperative game. Also, we can get 
\[
\setlength{\arraycolsep}{2mm}\bm{U}+\bm{V}=\begin{pmatrix}2(2-\alpha) & 2-\alpha & -\alpha+\beta\\
2-\alpha & 0 & -(2-\beta)\\
-\alpha+\beta & -(2-\beta) & -2(2-\beta)
\end{pmatrix}.
\]
We see that $\sigma=\max_{i,j}(u_{ij}+v_{ij})=2(2-\alpha)$ because $2(2-\alpha)-(-\alpha+\beta)=(2-\alpha)+(2-\beta)>0$ under the condition $1<\alpha,\beta<2$.  Now the TU solution is 
\[
\bm{\varphi}^{*}=\bigg(\frac{\sigma+\delta}{2},\frac{\sigma-\delta}{2}\bigg)=(2-\alpha,2-\alpha).
\]
Since the cooperative strategy gives $(u_{11},v_{11})=(2-\alpha,2-\alpha)$, this does not require any side payment. Converting it to the original notation, we get $\bm{\varphi}^{*}=(2B-c,2B-c)$.

\subsection{NTU solution based on the Nash Bargaining Model} 

First of all, we have to compare the slopes of the line segments representing the Pareto optimal boundary to find the NTU solution because the slopes
could depend on the parameters $\alpha$ and $\beta$. We can think of two cases as in Figure~\ref{NTU set1} and Figure~\ref{NTU set2}. Figure~\ref{NTU set1} shows that the slope of the line segment $P_1$ from $(-\alpha,\beta)$ to $(2-\alpha,2-\alpha)$ is less than that of the line segment $Q_1$ from $(-(\alpha-1),1)$ to $(2-\alpha,2-\alpha)$, equivalently, the slope of the line segment $P_2$ from $(2-\alpha,2-\alpha)$ to $(\beta,-\alpha)$ is greater than that of the line segment $Q_2$ from $(2-\alpha,2-\alpha)$ to $(1,-(\alpha-1))$, and vice versa in Figure~\ref{NTU set2}.
\begin{description}
\item [{Case}] 1: $\text{slope}(P_1)<\text{slope}(Q_1)$, i.e., $-(\alpha+\beta-2)/2<-(\alpha-1)$ or $\alpha<\beta$; equivalently, $\text{slope}(P_2)>\text{slope}(Q_2)$.
\end{description}

\begin{figure}[htb]
\begin{center}
 \begin{tikzpicture}
 \begin{axis}[
 axis lines=middle,
 axis line style= {-},
 axis y line=left,
 axis x line=bottom, 
 axis on top=true,
 xmin=-3,xmax=3,
 ymin=-3,ymax=3,
 x label style={at={(axis description cs:0.5,-0.1)},anchor=north},
 y label style={at={(axis description cs:-0.1,.5)},rotate=90,anchor=south},
 xlabel={\text{Player 1's payoff}},
 ylabel={\text{Player 2's payoff}},
 height=10cm,
 width=10cm,
 grid,
 xtick={-3,-2,...,3},
 ytick={-3,-2,...,3},
 ]
 \addplot[name path=plot1,smooth,thick,blue,-, domain=0.5:1.9]({-(12/7)*x+19/14},{x});
 \addplot[name path=plot2,smooth,thick,blue,-, domain=-1.9:0.5]({-(7/12)*x+19/24},{x});
 \addplot[name path=plot3,smooth,thick,black,-, domain=0.5:1]({-(13/5)*x+9/5},{x});
 \addplot[name path=plot4,smooth,thick,black,-, domain=-0.8:0.5]({-(5/13)*x+9/13},{x});
 \node [right, black] at (axis cs: -2.9,1.9) {$(-\alpha, \beta)$};
 \node [right, black] at (axis cs: 0.55,0.5) {$(2-\alpha, 2-\alpha)$};
 \node [right, black] at (axis cs: 0.8,-1.9) {$(\beta,-\alpha)$};
 \node [right, black] at (axis cs: -2.4,01.0) {$(-(\alpha-1), 1)$};
 \node [right, black] at (axis cs: -0.65,-0.8) {$(1,-(\alpha-1))$};
 
 \node [right, blue] at (axis cs: -0.5,1.25) {$P_1$};
 \node [right, blue] at (axis cs: 1.25,-0.5) {$P_2$};
 \node [right, black] at (axis cs: -0.5,0.6) {$Q_1$};
 \node [right, black] at (axis cs: 0.3,-0.25) {$Q_2$};
 \end{axis};
 
 \end{tikzpicture}   
 \protect\caption{\label{NTU set1} The Pareto optimal boundary in Case 1 is shown in blue.}
 \end{center}
\end{figure}

Let us start with Figure \ref{NTU set1}. In this case, the NTU solution should be on the line of the equation 
\begin{equation}\label{P1}
v=-\frac{\alpha+\beta-2}{2}u+\frac{(2-\alpha)(\alpha+\beta)}{2},\quad -\alpha\le u\le 2-\alpha,
\end{equation}
or that of the equation
\begin{equation}\label{P2}
v=-\frac{2}{\alpha+\beta-2}u+\frac{(2-\alpha)(\alpha+\beta)}{\alpha+\beta-2},\quad 2-\alpha\le u\le \beta.
\end{equation}

We consider the disagreement point $(u^{*},v^{*})=(-(2-\beta),-(2-\beta))$ in the TU solution section as the threat point. The set of Pareto optimal points consists of the two line segments \eqref{P1} and \eqref{P2} above.  The NTU solution is that point $(u,v)$ along this path that maximizes $(u+2-\beta)(v+2-\beta)$. Now, using the equation \eqref{P1}, we can rewrite this as a quadratic
\[
f(u):=(u+2-\beta)\bigg(\!\!-\frac{\alpha+\beta-2}{2}u+\frac{(2-\alpha)(\alpha+\beta)}{2}+2-\beta\bigg).
\]
The maximum of $f(u)$ occurs at 
\[
\hat{u}=\frac{-\alpha^{2}+\beta^{2}-4\beta+8}{2(\alpha+\beta-2)},
\]
but $\hat u-(2-\alpha)=(4-\alpha-\beta)^2/[2(\alpha+\beta-2)]>0$, so the maximum of $f(u)$ over $-\alpha\le u\le2-\alpha$ occurs at $u=2-\alpha$.
Similarly, if we substitute the linear function \eqref{P2} for $v$ in $(u+2-\beta)(v+2-\beta)$, a similar argument shows that $f(u)$ is maximized over $[2-\alpha,\beta]$ at $u=2-\alpha$.  Hence, $(u+2-\beta)(v+2-\beta)$ is maximized along the Pareto optimal boundary at $(\bar{u},\bar{v})=(2-\alpha,2-\alpha)$, which is the NTU solution.

\begin{description}
\item [{Case}] 2:  $\text{slope}(P_1)>\text{slope}(Q_1)$, i.e., $-(\alpha+\beta-2)/2>-(\alpha-1)$ or $\alpha>\beta$; equivalently, $\text{slope}(P_2)<\text{slope}(Q_2)$. 
\end{description}
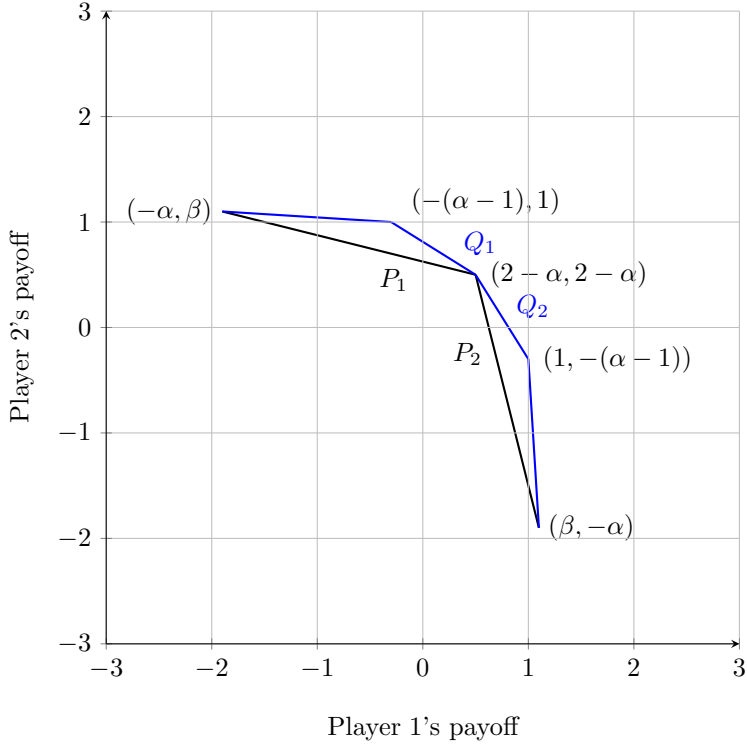
\begin{figure}[htb]
 \begin{center}		
 	\begin{tikzpicture}
 	\begin{axis}[
 	axis lines=middle,
 	axis line style= {-},
 	axis y line=left,
 	axis x line=bottom, 
 	axis on top=true,
 	xmin=-3,xmax=3,
 	ymin=-3,ymax=3,
 	x label style={at={(axis description cs:0.5,-0.1)},anchor=north},
 	y label style={at={(axis description cs:-0.1,.5)},rotate=90,anchor=south},
 	xlabel={\text{Player 1's payoff}},
 	ylabel={\text{Player 2's payoff}},
 	height=10cm,
 	width=10cm,
 	grid,
 	xtick={-3,-2,...,3},
 	ytick={-3,-2,...,3},
 	]
 	\addplot[name path=plot1,smooth,thick,black,-, domain=0.5:1.1]({-(4)*x+2.5},{x});
 	\addplot[name path=plot2,smooth,thick,black,-, domain=-1.9:0.5]({-(1/4)*x+5/8},{x});
 	\addplot[name path=plot3,smooth,thick,blue,-, domain=1:1.1]({-(16)*x+15.7},{x});
 	\addplot[name path=plot4,smooth,thick,blue,-, domain=-1.9:-0.3]({-(1/16)*x+15.7/16},{x});	
 	\addplot[name path=plot5,smooth,thick,blue,-, domain=0.5:1]({-(8/5)*x+13/10},{x});
 	\addplot[name path=plot6,smooth,thick,blue,-, domain=-0.3:0.5]({-(5/8)*x+13/16},{x});		
 	\node [right, black] at (axis cs: -2.9,1.1) {$(-\alpha, \beta)$};
 	\node [right, black] at (axis cs: -0.2,1.2) {$(-(\alpha-1), 1)$};
 	\node [right, black] at (axis cs: 0.55,0.5) {$(2-\alpha, 2-\alpha)$};
 	\node [right, black] at (axis cs: 1.05,-0.3) {$(1,-(\alpha-1))$};
 	\node [right, black] at (axis cs: 1.1,-1.9) {$(\beta,-\alpha)$};

 \node [right, blue] at (axis cs: 0.3,0.8) {$Q_1$};
 \node [right, blue] at (axis cs: 0.8,0.2) {$Q_2$};
 \node [right, black] at (axis cs: -0.5,0.45) {$P_1$};
 \node [right, black] at (axis cs: 0.2,-0.25) {$P_2$};
 	\end{axis};
 	\end{tikzpicture}
 \end{center} 
\protect\caption{\label{NTU set2}The Pareto optimal boundary in Case 2 is shown in blue.}
\end{figure}

We should be more careful with this case because the constraint $\alpha>\beta$ implies that the Pareto optimal boundary comprises four different line segments (see Figure \ref{NTU set2}). Now, we consider the two (unlabeled) outer line segments from $(-\alpha,\beta)$ to $(-(\alpha-1),1)$ and from $(1,-(\alpha-1))$ to $(\beta,-\alpha)$, whose equations are 
\begin{equation}\label{Q1*}
v=-(\beta-1)u+\alpha+\beta-\alpha\beta,\quad -\alpha\le u\le -(\alpha-1),
\end{equation}
 and 
\begin{equation}\label{Q2*}
v=-\frac{1}{\beta-1}u+\frac{\alpha+\beta-\alpha\beta}{\beta-1},\quad 1\le u\le \beta.
\end{equation}

First, we check whether the NTU solution could be on the line of equation \eqref{Q1*} or \eqref{Q2*}. If we maximize $(u+2-\beta)(v+2-\beta)$ along \eqref{Q1*}, then we must maximize $f(u):=(u+2-\beta)(-(\beta-1)u+\alpha-\alpha\beta+2)$ over $-\alpha\le u\le-(\alpha-1)$.  We find that the maximum of this quadratic occurs at $\hat u>-(\alpha-1)$, so the maximum over $[-\alpha,-(\alpha-1)]$ occurs at $u=-(\alpha-1)$. Similarly, maximizing along \eqref{Q2*}, we must maximize $f(u):=(u+2-\beta)(-(\beta-1)^{-1}u+(\beta-1)^{-1}(\alpha+\beta-\alpha\beta)+2-\beta)$ over $1\le u\le\beta$.  The maximum occurs at $u=1$.

Now, we focus on two other line segments, $Q_1$ given by
\begin{equation}\label{Q1}
v=-(\alpha-1)u+\alpha(2-\alpha),\quad -(\alpha-1)\le u\le 2-\alpha,
\end{equation}
and $Q_2$ given by
\begin{equation*}
v=-\frac{1}{\alpha-1}u+\frac{\alpha(2-\alpha)}{\alpha-1},\quad 2-\alpha\le u\le 1.
\end{equation*}

Along the line segment \eqref{Q1}, we can maximize $(u-\beta+2)(v-\beta+2)$ by maximizing $f(u):=(u-\beta+2)(-(\alpha-1)u+\alpha(2-\alpha)+2-\beta)$ over $-(\alpha-1)\le u\le2-\alpha$.  The maximum of the quadratic occurs at $\hat u>2-\alpha$, so its maximum over $[-(\alpha-1),2-\alpha]$ occurs at $u=2-\alpha$, and $f(2-\alpha)=(4-\alpha-\beta)^2$.  Similarly, the maximum of $(u-\beta+2)(v-\beta+2)$ over $2-\alpha\le u\le1$ occurs at $u=2-\alpha$ with the same result. Hence, $f(u)=(u-\beta+2)(v-\beta+2)$ is maximized along the Pareto optimal boundary at $(\bar{u},\bar{v})=(2-\alpha,2-\alpha)$, which is the general NTU solution when $\alpha>\beta$.  This coincides with our result in the case $\alpha<\beta$, and both arguments apply when $\alpha=\beta$.

\subsection{NTU solution based on the lambda transfer approach}

We have the transferred bimatrix, 
\[
(\lambda\bm{U},\bm{V})=\begin{pmatrix}(\lambda(2-\alpha),2-\alpha) & (-\lambda(\alpha-1),1) & (-\lambda\alpha,\beta)\\
(\lambda,-(\alpha-1)) & (0,0) & (-\lambda,\beta-1)\\
(\lambda\beta,-\alpha) & (\lambda(\beta-1),-1) & (-\lambda(2-\beta),-(2-\beta))
\end{pmatrix}.
\]
and therefore
\begin{align*}
\lambda\bm{U}-\bm{V} & = \begin{pmatrix}(\lambda-1)(2-\alpha) & -\lambda(\alpha-1)-1 & -\lambda\alpha-\beta\\
\lambda+\alpha-1 & 0 & -\lambda-\beta+1\\
\lambda\beta+\alpha & \lambda(\beta-1)+1 & (1-\lambda)(2-\beta)
\end{pmatrix}.
\end{align*}
We can verify that the $(3,3)$ entry is a saddle point, so $\delta(\lambda)=(1-\lambda)(2-\beta)$.  This involves showing that $(1-\lambda)(2-\beta)$ is a row minimum and a column maximum, regardless of $0<\lambda<\infty$.

To evaluate $\sigma(\lambda)$ we need the maximal entry of
\[
\setlength{\arraycolsep}{2mm}\lambda\bm{U}+\bm{V}=\begin{pmatrix}(\lambda+1)(2-\alpha) & -\lambda(\alpha-1)+1 & -\lambda\alpha+\beta\\
\lambda-\alpha+1 & 0 & -\lambda+\beta-1\\
\lambda\beta-\alpha & \lambda(\beta-1)-1 & -(\lambda+1)(2-\beta)
\end{pmatrix},
\]
so let us first consider the case $0<\lambda\le1$.  Then, comparing $(\lambda+1)(2-\alpha)$ with each of the other entries of $\lambda\bm{U}+\bm{V}$, we find that $\sigma(\lambda)=(\lambda+1)(2-\alpha)$ provided 
$$
\frac{\alpha+\max(\alpha,\beta)}{2}-1\le\lambda\le1.
$$
(We are using the fact that $\alpha>\beta/(3-\alpha)$, regardless of $\alpha,\beta\in(1,2)$.)
In this case, the TU solution of the transferred problem is
\begin{align*}
\bm\varphi(\lambda)&=\bigg(\frac{\sigma(\lambda)+\delta(\lambda)}{2\lambda},\frac{\sigma(\lambda)-\delta(\lambda)}{2}\bigg)\\
&=\bigg(\frac{\beta-\alpha}{2}+\frac{4-\alpha-\beta}{2\lambda},\frac{\beta-\alpha}{2}+\frac{(4-\alpha-\beta)\lambda}{2}\bigg),
\end{align*}
which reduces to $(2-\alpha,2-\alpha)$ when $\lambda=1$.  Thus, $\lambda^*=1$ and $\bm\varphi(\lambda^*)=(2-\alpha,2-\alpha)$.  A similar argument applies when $1\le\lambda<\infty$.

\section{Conclusion}

Using cooperative game theory, we obtained a different solution than the one found using non-cooperative game theory. Our game solution against terrorism is to take a firm attitude toward terrorists, that is, (preempt, preempt), even though there are many constraints in the real world.  Arce M. and Sandler (2005) wanted to show why countries facing terrorism take the passive action against terrorists. In contrast, this paper shows there is a positive effect when all countries facing terrorism stand firm, cooperating with each other.

\end{document}